\newtheorem {theorem} {Theorem}%[section]
\newtheorem{corollary}[theorem]{Corollary}
\newtheorem {remark} [theorem] {Remark}
\newtheorem {problem} [theorem] {Problem}
\newtheorem{question}[theorem]{Question}
\newcommand{\showcomments}{yes}
\newsavebox{\commentbox}
\newenvironment{comment}%
\begin{document}

\title[Maps with all nonwandering points periodic]
{On diffeomorphisms of compact 2-manifolds with all  nonwandering points periodic}

\author[S. Boyd, J.L.G. Guirao and M. Hero]{Suzanne Boyd$^1$, Juan L.G. Guirao$^2$ and Michael W. Hero$^3$}

\address{$^{1}$ Department of Mathematical Sciences,
University of Wisconsin--Milwaukee,
Milwaukee, WI 53201-0413, USA.}

\email{sboyd@uwm.edu}

\address{$^{2}$ Departamento de Matem\'{a}tica Aplicada y Estad\'{\i}stica. Universidad Polit\'{e}cnica de Cartagena,
Campus de la Muralla, 30203-Cartagena\newline (Regi\'{o}n de Murcia), Spain}
\email{juan.garcia@upct.es}

\address{$^{3}$Department of Mathematics, University of Iowa, Iowa City, IA 52242-1419}
\email{michael-hero@uiowa.edu}

\thanks{2000 Mathematics Subject Classification. Primary 37B20. Secondary 37E30.}

\keywords{periodic point, nonwandering point, Axiom A, diffeomorphism}

\begin{abstract}
The aim of the present paper is to study conditions under which all the non-wandering points  are periodic points,  for a discrete dynamical system of two variables defined on a compact manifold. We include a survey of known results in all dimensions, and study the remaining open question in dimension two. We present two results, one positive and one negative. The negative result: we construct a Kupka--Smale diffeomorphism in $\mathbb{R}^2$ (which can be extended to a diffeomorphism of the sphere)  with a closed set of periodic points that differs from the set of  nonwandering points. The positive result: we present a condition on the widely studied H\'{e}non family which guarantees that all nonwandering points are periodic. Finally, we close by describing what future work may be needed to resolve our broad goals.
\end{abstract}

\maketitle

%===================================================
\section{Introduction and statement of the main results}\label{Se1}
%====================================================

Let $\psi$ be a continuous self-map of a compact manifold $\mathbb{M}$.  
Given a point $x$ on $\mathbb{M}$, we define the \emph{trajectory} or \emph{orbit} of $x$ under $\psi$ to be the sequence $\{\psi^{n}(x)\}_{n=0}^{\infty}$, where  $\psi^{0}:=\text{Id}_{\mathbb{M}}$, the identity map on $\mathbb{M}$, and $\psi^{n}(x):=\psi^{n-1}\circ \psi$ for $n>0$. Then $(\mathbb{M},\psi)$ is a discrete dynamical system. 

\smallskip
The study of a discrete dynamical system begins with a qualitative analysis of the time evolution, including a description of the asymptotic behavior of trajectories.  One of the next natural steps is to understand or classify which dynamical systems in a given collection or family of systems will share some common orbit behaviors.

As the theory of discrete dynamical systems has developed over the last several decades, we have learned how surprising, intricate, and chaotic this behavior might be.  In this paper we are concerned with further classifying the dynamical systems whose orbits do \emph{not} exhibit chaotic behavior. 

Whether a map is chaotic depends upon the behavior of recurrent points: points whose orbits return near to themselves at some time.  The simplest type of points which exhibit some recurrent behavior are the periodic points:  a point $x$ is called \emph{periodic} if there exists a non--negative integer $p$ such that $\psi^{p}(x)=x$. If $x$ is periodic then the least $p>0$ with $\psi^{p}(x)=x$ is the  \emph{period} of $x$.   A larger class of recurrent points that play a key role are the $\emph{non-wandering points}$: a point $x \in \mathbb{M} $ is called a $\emph{non-wandering point}$ if for every open neighborhood $\mathbb{O}$ of $x$  there is an $n>0$, depending on $x$, such that $\psi^n(\mathbb{O}) \bigcap \mathbb{O} \neq \emptyset$.   We denote the set of all periodic points by $\text{P}(\psi)$, the set of periodic points of period n by  $\text{P}(\psi,n)$, and the set of non-wandering points by  $\Omega(\psi)$.

\smallskip
From the definitions we have the following chain of inclusions:
\begin{equation*}\label{eq1}
\text{P}(\psi,n)\subseteq \text{P}(\psi)\subseteq \Omega(\psi).
\end{equation*}

Our broad goal is to classify which dynamical systems of compact 2-manifolds have $P(\psi) = \Omega(\psi)$, that is, 
 the nonwandering set consists entirely of periodic points.  It is an easy exercise to show that the set of non-wandering points is closed.  Thus our motivating question is:  

\begin{question}
{If $P(\psi)$ is closed, under what additional conditions on the dynamical system $(\mathbb{M}, \psi)$ can we conclude that $P(\psi) = \Omega(\psi)$?}
\end{question}

%-----------------------------------------------------------------------
\subsection{Dimension One}
%-------------------------------------------------------------------------

A key fact for continuous self-maps of the interval was established by Sarkovskii in 1964 \cite{Sh}.

\begin{theorem}[Sarkovskii]
Let $ f:[0,1]\rightarrow [0,1] $ be continuous.
If $ P(f) $ is a closed set, then every periodic point of $f$ has period a power of $2$.
\end {theorem}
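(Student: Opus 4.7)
The plan is to prove the contrapositive: if some periodic point of $f$ has period that is not a power of $2$, then $P(f)$ is not closed. First I would reduce to the case of an odd period $>1$. Suppose $x$ has period $p = 2^k m$ with $m>1$ odd, and set $g = f^{2^k}$. Then $x$ is $g$-periodic of odd period $m$, and since $g^q(x)=x$ if and only if $f^{2^k q}(x)=x$, a point is $g$-periodic exactly when it is $f$-periodic, giving $P(g)=P(f)$. So it suffices to prove the result assuming $f$ itself has a periodic orbit of odd period $m>1$.

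Next I would use the combinatorial strengthening along the Sarkovskii ordering $3 \triangleright 5 \triangleright 7 \triangleright \cdots$ to extract a horseshoe for some iterate $f^N$: two compact subintervals $I,J \subseteq [0,1]$ with disjoint interiors such that $f^N(I) \supseteq I \cup J$ and $f^N(J) \supseteq I \cup J$. The standard way to produce this from an odd-period orbit of length $m>1$ is to study the Markov graph associated to the piecewise-linearization of $f$ on the orbit, locate a closed loop that forces an overlap, and then concatenate loops. The period-$3$ case is the original Li--Yorke horseshoe; the general odd case is a combinatorial extension.

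Given the horseshoe, I would pass to the compact invariant set $\Lambda \subseteq I \cup J$ on which $f^N$ is semiconjugate via the itinerary map to the full one-sided shift on two symbols, so that periodic points of $f^N$ are dense in $\Lambda$. To exhibit an element of $\overline{P(f)} \setminus P(f)$, I would fix a non-eventually-periodic binary sequence $s$ that avoids the countable set of sequences with multiple $\Lambda$-preimages, let $x_s \in \Lambda$ be its image, and approximate $x_s$ by the points corresponding to longer and longer periodic truncations of $s$. Those approximants are genuinely $f^N$-periodic (hence $f$-periodic), while $x_s$ itself is not periodic because its itinerary is not eventually periodic. Thus $P(f)$ fails to be closed, completing the contrapositive.

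The main obstacle is the passage from the combinatorics of an odd-period orbit to an honest horseshoe at the correct iterate $N$; this is the Block--Coven--Nitecki circle of ideas and requires a careful case analysis of the cyclic permutation induced on the orbit. A secondary technical point is ensuring that the limit of periodic approximants is genuinely non-periodic, which forces the avoidance of the (countable) degeneracy set of the itinerary semiconjugacy. Once both are in place the argument closes cleanly.
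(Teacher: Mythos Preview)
The paper does not prove this statement; it is quoted as a classical 1964 result of Sarkovskii (reference \cite{Sh} in the paper) and used only as background in the dimension-one survey. There is therefore nothing in the paper to compare your argument against.

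That said, your route is the standard modern one and is essentially sound. Two small corrections. First, the reduction via $g=f^{2^k}$ and $P(g)=P(f)$ is correct, and once $g$ has an orbit of odd period $m>1$ it is a known theorem (see e.g.\ Block--Coppel) that $g^2$ is already turbulent; so the horseshoe appears at a specific iterate and the ``careful case analysis'' you anticipate is already packaged in the literature. Second, the step about avoiding a ``countable set of sequences with multiple $\Lambda$-preimages'' is both unnecessary and not obviously true in general. You do not need any injectivity of the itinerary map: choose periodic points $p_n\in\Lambda$ whose itineraries converge to a fixed non-periodic sequence $s$, pass to a subsequential limit $y\in\Lambda$ by compactness, and use continuity of the itinerary semiconjugacy to see that $y$ has itinerary $s$. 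Since a periodic point of $f$ is also periodic for $f^N$ and hence must have a periodic itinerary, $y\notin P(f)$, while $y\in\overline{P(f)}$ by construction. This closes the argument without any countability claim.
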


Next consider Sarkovskii's ordering of the appearance of periodic points.

\begin {theorem} [Sarkovskii's ordering]
Let $f:[0,1]\rightarrow [0,1]$ be a continuous function, and let the  positive integers be totally ordered in the following way:
\[3\prec 5\prec 7 \prec \cdots \prec2\cdot 3\prec 2\cdot 5\prec 2\cdot 7 \prec \cdots \prec2^2\cdot3 \prec 2^2\cdot 5 \prec 2^2\cdot 7  \prec \cdots \]
\[\cdots \prec 2^4\prec 2^3\prec 2^2\prec 2\prec 1 \]
If $f$ has a periodic point of period $n$ and $n \prec m$, then $f$ has a periodic point of period $m$.

\end{theorem}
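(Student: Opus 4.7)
The plan is to prove Sarkovskii's ordering via the method of \emph{Markov graphs} (transition digraphs) built from a given periodic orbit. The fundamental ingredient, which follows directly from the Intermediate Value Theorem, is: if $J$ is a closed interval with $f(J) \supseteq J$, then $J$ contains a fixed point of $f$; more generally, if $J_0 \to J_1 \to \cdots \to J_{k-1} \to J_0$ is a cycle of coverings $f(J_i) \supseteq J_{i+1}$, there exists a point $x$ with $f^i(x) \in J_i$ for all $i$ and $f^k(x) = x$. Under an additional primitivity hypothesis on the loop (that no proper sub-loop realizes the same itinerary), the period of $x$ is exactly $k$, so the whole problem reduces to locating primitive cycles of prescribed lengths in a suitable Markov graph.

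Given a periodic orbit $P = \{p_1 < p_2 < \cdots < p_n\}$ of period $n$, I would partition its convex hull into the $n-1$ basic intervals $I_j = [p_j, p_{j+1}]$ and form the directed graph $G_P$ with these intervals as vertices and an arrow $I_i \to I_j$ precisely when $f(I_i) \supseteq I_j$. The task becomes showing that $G_P$ contains a primitive loop of every length $m$ with $n \prec m$.

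The principal case is $n$ odd with $n \geq 3$. The key step, which I will call \emph{Stefan's Lemma}, is to relabel $P$ so that $G_P$ contains a distinguished interval $I^*$ with a self-loop together with a long cycle $I^* \to I_2 \to I_3 \to \cdots \to I_{n-1} \to I^*$; concatenating the self-loop with the long cycle then yields primitive loops of every length $\geq n-1$, which covers the odd periods larger than $n$ and the even periods at least $n-1$, while a small additional argument within the same graph extracts the remaining short even loops of lengths $2,4,\ldots,n-3$. The main obstacle, and the delicate heart of the proof, is precisely establishing this Stefan structure: one must analyze the permutation induced by $f$ on $P$ and use a parity argument, exploiting the odd cardinality of $P$, to locate a pivot whose image reverses direction, yielding the interval $I^*$ and forcing the prescribed covering relations.

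The remaining case $n = 2^i$ I would handle by induction on $i$: one shows that $f^2$ inherits, on a suitable $f^2$-invariant subinterval, a periodic orbit of period $2^{i-1}$, reducing to smaller powers of two and ultimately to the trivial existence of a fixed point for any continuous self-map of a compact interval, furnished by applying the Intermediate Value Theorem to $f(x)-x$. Combining this halving argument with the Stefan-graph analysis of the odd case completes the full ordering.
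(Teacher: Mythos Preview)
The paper does not prove Sarkovskii's ordering; the theorem is stated purely as classical background, attributed to Sarkovskii's 1964 article, and is used only to motivate the dimension-one discussion. There is therefore no proof in the paper against which to compare your attempt.

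For what it is worth, your outline is the standard Stefan--Block argument and is essentially correct, but one case is not accounted for. You explicitly treat only $n$ odd and $n=2^{i}$; the mixed case $n=2^{i}m$ with $m\ge 3$ odd and $i\ge 1$ is left to the closing phrase ``combining this halving argument with the Stefan-graph analysis.'' The natural combination---pass to $f^{2^{i}}$, which has a point of odd period $m$, apply Stefan there, and lift back to $f$---does not work without further argument, because a point of exact period $k$ for $f^{2^{i}}$ need only have period $2^{j}k$ for some $j\le i$ under $f$, not necessarily $2^{i}k$. The usual repair (as in Block--Coppel) is either a separate doubling lemma or a direct analysis of the $2^{i}m$-orbit; either way it is a genuine additional step, not a formality. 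A smaller point: in the odd case the ``small additional argument'' producing the short even loops $2,4,\ldots,n-3$ is in fact where most of the bookkeeping lives, since it requires the extra shortcut arrows $I_{n-1}\to I_{j}$ in the Stefan graph, not just the long cycle and the self-loop you describe.
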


These results restrict the class of self maps of the interval to itself with a closed set of periodic points to the non chaotic mappings.

In 1982, our main question was answered in dimension one for self-maps of a closed interval, independently by Z. Nitecki (\cite{N}) and Jin-Cheng Xiong (\cite{X}): the hypothesis needed is simply that $f$ is continuous.

\begin{theorem}[Nitecki, Xiong]
If $f$ is a continuous map of a closed interval to itself and $P(f)$ is a closed set, then $P(f)=\Omega(f)$.
\end{theorem}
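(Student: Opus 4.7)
The plan is to establish the inclusion $\Omega(f)\subseteq P(f)$, since $P(f)\subseteq\Omega(f)$ holds automatically from the definitions. I would factor this through the classical intermediate statement:
\[
\Omega(f)\subseteq\overline{P(f)} \quad \text{for every continuous } f\colon [0,1]\to[0,1].
\]
Granting this inclusion, the hypothesis that $P(f)$ is closed collapses the right-hand side to $P(f)$ and finishes the proof.

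To prove $\Omega(f)\subseteq\overline{P(f)}$, I would fix $x\in\Omega(f)$ and $\epsilon>0$ and try to locate a periodic point in the $\epsilon$-neighborhood of $x$. The nonwandering property furnishes a point $y$ within $\epsilon/2$ of $x$ and an integer $n\geq 1$ with $f^n(y)$ also within $\epsilon/2$ of $x$, so both $y$ and $f^n(y)$ lie in $J=(x-\epsilon,x+\epsilon)$. If $f^n(y)=y$ we are done, so we may assume $f^n(y)>y$. The strategy is then to exhibit points $u,v\in J$ and an integer $m$ with $f^m(u)\geq u$ and $f^m(v)\leq v$; applying the intermediate value theorem to the continuous function $w\mapsto f^m(w)-w$ produces a zero between $u$ and $v$, which is a periodic point of $f$ inside $J$.

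The main obstacle is producing the required sign change $f^m(v)\leq v$ for the same iterate $m$. I would handle it by invoking the nonwandering hypothesis repeatedly, with progressively smaller tolerance, to generate an infinite family of near-returns $(y_k,n_k)$ with $y_k\to x$ and $f^{n_k}(y_k)\to x$. Arguing by contradiction, assume $J$ contains no periodic points; then each function $w\mapsto f^m(w)-w$ is continuous and nonvanishing on $J$, hence has a well-defined constant sign $\sigma(m)\in\{+,-\}$ on $J$. Tracking the iterates $f^{j n_k}(y_k)$ that remain in $J$ shows that under $\sigma(n_k)=+$ they form an increasing sequence (and symmetrically for $\sigma(n_k)=-$), so they must eventually exit $J$ on a definite side; combining this with the fact that yet-later near-returns reinsert orbit fragments back into $J$, a combinatorial argument using the linear ordering of the interval forces two of these returns, under a common iterate $m$, to lie on opposite sides of the diagonal, yielding the desired pair $(u,v)$. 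This is the step where the one-dimensional topology is essential; in higher dimensions no such ordering-based argument is available, which is precisely the obstruction motivating the remainder of the paper.
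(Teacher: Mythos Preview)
The paper does not supply a proof of this theorem; it is quoted as a known result of Nitecki and Xiong with citations to \cite{N} and \cite{X}. So there is no ``paper's proof'' to compare against.

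That said, your proposed route has a genuine gap. The intermediate inclusion you aim to establish,
\[
\Omega(f)\subseteq\overline{P(f)} \quad\text{for every continuous } f\colon[0,1]\to[0,1],
\]
is \emph{false} in general. There are well-known continuous interval maps with nonwandering points lying outside the closure of the periodic set; indeed the very title of Xiong's paper \cite{X} asserts only that $\Omega\bigl(f\restriction_{\Omega(f)}\bigr)=\overline{P(f)}$, which is strictly weaker and would be a vacuous observation if your inclusion held. If your factorization were valid the Nitecki--Xiong theorem would be a triviality, which it is not.

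Consequently the ``combinatorial argument using the linear ordering of the interval'' that you gesture at cannot succeed as stated: you are attempting to prove a false statement, so at least one step must fail. The actual proofs in \cite{N} and \cite{X} use the hypothesis that $P(f)$ is closed in an essential way from the outset---for instance, invoking Sarkovskii's theorem (stated just above in the paper) to force every period to be a power of~$2$, and then exploiting that structural restriction. Your outline never uses the closedness of $P(f)$ until the final trivial step, and that is where the argument goes wrong.
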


It should be noted that there are self maps of the interval with every periodic point having period a power of two such that the set of all periodic points is not a closed set. This result is due to Hsin Chu and Jin Cheng Xiong \cite{CX}.

Further, Nitecki points out that $P(f)$ closed does not imply that set is finite, nor is the set of least periods necessarily finite.  His example: string together maps $f_n \colon [\frac{1}{n}, \frac{1}{n-1}] \to [\frac{1}{n}, \frac{1}{n-1}]$ so that $f_n(\frac{1}{k}) = \frac{1}{k}$ and Per$(f_n)$ contains points of least period $2^n$, but  none higher.    

However, the slightest change in the topological type of the space will cause these theorems to be false. This is easily seen by considering a circle map under an irrational rotation, where every point is non-wandering but the set of periodic points is the empty set.

Now, if we consider the dimension of the manifold to be larger than one, soon we will discover the situation is much more complicated already in dimension two.

\subsection{Dimension Two}\smallskip

If $\mathbb{M}$ is of  dimension two,  the hypothesis that the map $\psi$ be continuous does not guarantee that $P(\psi)$ closed implies $P(\psi) = \Omega(\psi)$.  In particular,
for continuous self-maps of the unit square,  Guirao and Pelayo \cite{JL} showed in 2008 that $P(\psi)$ closed does not imply that $P(\psi)$ equals $\Omega(\psi)$, although for some years this implication was considered as true in the literature, see \cite{Efre} and \cite{JL}.

Their example was a skew-product, or triangular map;  i.e., maps of the form $\psi(x,y) = (f(x), g(x,y))$, where $f(x)$ is called the \emph{base map} of the system.  Such maps form a good bridge between study of one and two variable systems.
For $\mathcal{C}^1$ skew--product self--maps defined on the unit square, Arteaga \cite{Ar} supplied a sufficient hypothesis:

\begin{theorem}[Arteaga] Suppose $\psi$ is a $\mathcal{C}^1$ skew product self-map of the unit square, $\psi(x,y) = (f(x), g(x,y))$, and  all periodic points of the base maps $f$ are hyperbolic. If $P(\psi)$ is closed, then $P(\psi) = \Omega(\psi)$.  
\end{theorem}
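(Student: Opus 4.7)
My plan is to reduce the two-dimensional statement to two applications of the Nitecki--Xiong theorem, one on the base map $f$ and one on the fiber map over the orbit of the relevant periodic point. First I would transport the closedness hypothesis to the base by showing $P(f)=\pi_1(P(\psi))$: the forward inclusion is immediate, and for the backward one, if $x\in P(f,n)$ then the iterated fiber map $y\mapsto g(f^{n-1}(x),\ldots,g(f(x),g(x,y))\ldots)$ is a continuous self-map of $[0,1]$, hence has a fixed point by the intermediate value theorem, producing a periodic point of $\psi$ over $x$. Since $P(\psi)$ is compact, $P(f)$ is closed, and the Nitecki--Xiong theorem yields $P(f)=\Omega(f)$. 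Hyperbolicity enters next: each periodic point of $f$ is isolated as a fixed point of the appropriate iterate, and a closed set of isolated points in the compact interval $[0,1]$ is finite, so $P(f)$ is finite.

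Now fix $(x_0,y_0)\in\Omega(\psi)$. Projection gives $x_0\in\Omega(f)=P(f)$, so $x_0$ is a hyperbolic periodic point of some period $n$. The fiber $\{x_0\}\times[0,1]$ is $\psi^n$-invariant, and the restriction $G:=\pi_2\circ\psi^n|_{\{x_0\}\times[0,1]}$ is a continuous self-map of $[0,1]$. Observe that $P(G)=\pi_2(P(\psi)\cap(\{x_0\}\times[0,1]))$ is closed, so by Nitecki--Xiong applied to $G$ one has $P(G)=\Omega(G)$. It thus suffices to show $y_0\in\Omega(G)$, since then $y_0\in P(G)$ and $(x_0,y_0)\in P(\psi)$.

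To extract $y_0\in\Omega(G)$ from $(x_0,y_0)\in\Omega(\psi)$, I would exploit the hyperbolicity of $x_0$ to choose pairwise disjoint neighborhoods $U_0,\ldots,U_{n-1}$ of $x_0,f(x_0),\ldots,f^{n-1}(x_0)$ with $f(U_j)\subset U_{j+1\bmod n}$ and such that $f^n$ is a strict contraction on $\overline{U_0}$ in the attracting case, or admits a contracting local inverse in the repelling case. In either case, any orbit of $f$ starting in $U_0$ that returns to $U_0$ does so in a multiple of $n$ steps. Fix any open $W\ni y_0$; applying the nonwandering hypothesis to $V=U_0\times W$ yields $m\geq 1$ and $(a,b)\in V$ with $\psi^{nm}(a,b)\in V$. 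In the attracting case $\psi^{nm}$ sends the compact rectangle $\overline{U_0}\times[0,1]$ into itself, so Brouwer's fixed-point theorem provides a fixed point whose first coordinate must be the unique fixed point $x_0$ of the contraction $f^{nm}|_{\overline{U_0}}$; this yields a point of $P(G)$ for each allowable $m$. A continuity argument comparing the true fiber iterate $\pi_2\psi^{nm}(a,b)$ with $G^m(b)$ then shows that $y_0$ is accumulated by points of the closed set $P(G)$, whence $y_0\in P(G)=\Omega(G)$. The repelling case is analogous, working with the local inverse of $f^n$.

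The principal obstacle is this last step, specifically the subcase in which the return times $m$ forced by shrinking $U_0$ are unbounded. For bounded $m$, uniform continuity of $\psi^{nm}$ together with the openness of $W$ handle the matter cleanly: the iterate based at $a$ is forced close to the iterate based at $x_0$, producing a point of $G^m(W)\cap W$. But if $m\to\infty$, the discrepancy $\pi_2\psi^{nm}(a,b)-G^m(b)$ need not tend to zero since no contraction is assumed on the fiber. Resolving this case is where the $\mathcal{C}^1$ smoothness hypothesis earns its keep beyond mere continuity: either via a graph-transform argument exploiting the partially hyperbolic splitting near the attracting fiber, or via an index-theoretic argument counting fixed points of $\psi^{nm}$ in the invariant tube $\overline{U_0}\times[0,1]$ and tracking their accumulation on $(x_0,y_0)$.
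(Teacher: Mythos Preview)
The paper does not prove this theorem; it is quoted as a known result of Arteaga with a citation to \cite{Ar} (C.~Arteaga, \textit{J.\ Math.\ Anal.\ Appl.}\ \textbf{196} (1995), 987--997), and no argument for it appears anywhere in the present paper. There is therefore no proof here to compare your attempt against.

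On the merits of your outline: the overall architecture is sound and is the natural strategy. The identity $P(f)=\pi_1(P(\psi))$ is correct, as is $P(G)=\pi_2\bigl(P(\psi)\cap(\{x_0\}\times[0,1])\bigr)$, and both invocations of Nitecki--Xiong are legitimate. One small point you pass over too quickly: the inference ``hyperbolic periodic points are isolated, hence $P(f)$ is finite'' requires knowing that a hyperbolic $p\in P(f,n)$ is isolated not merely among fixed points of $f^n$ but in all of $P(f)$. This does follow (any sequence $q_j\to p$ of periodic points has periods $m_j$ which, after passing to a subsequence, are either constant---contradicting hyperbolicity of $p$ for $f^{m}$---or tend to infinity, which one rules out using the local expansion/contraction of $f^n$ near $p$ together with finiteness of each orbit), but it deserves a line.

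You correctly locate the real obstacle, and you are candid that you have not overcome it: the passage from $(x_0,y_0)\in\Omega(\psi)$ to $y_0\in\Omega(G)$ when the return times $m\to\infty$. Your Brouwer argument manufactures periodic points in the invariant tube, but gives no control on their second coordinate relative to $y_0$; and the comparison of $\pi_2\psi^{nm}(a,b)$ with $G^m(b)$ genuinely has no uniform bound for large $m$ without additional structure. The proposed remedies---``graph-transform over the partially hyperbolic fiber'' or ``an index-theoretic count''---are gestures rather than arguments. As it stands, the $\mathcal{C}^1$ hypothesis has been used only to make ``hyperbolic'' meaningful and to obtain finiteness of $P(f)$; it has not yet done the work you say it must do at the crux. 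So the proposal is incomplete at exactly the point you flag, and completing it is essentially the content of Arteaga's paper; you would need either to consult that source or to supply a full argument for the unbounded-return-time case.
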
 

\smallskip

Thus he required the map $\psi$ to be continuously differentiable, and have a base map with only hyperbolic periodic points. 

This led researchers to consider the question:

\begin{question}\label{c1}
If $\psi$ is a self--diffeomorphism of a compact 2--dimensional manifold (or $\mathbb{R}^2$) with all periodic points hyperbolic, then does $P(\psi)$ closed imply $P(\psi) = \Omega(\psi)$?
\end{question}

In our first result, we answer this question negatively. 

\begin{theorem}\label{th1} 
There exists a self--diffeomorphism $\psi_{\lambda}$ of the sphere
which has closed periodic set, and all periodic points hyperbolic, but $P({\psi_{\lambda}}) \neq \Omega(\psi_{\lambda})$.
\end{theorem}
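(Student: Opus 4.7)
The plan is to construct $\psi_\lambda$ explicitly as a rotation--scaling skew product in polar coordinates on $\mathbb{R}^2$, then extend to $S^2$ by one-point compactification. Specifically, in polar coordinates $(r,\theta)$, define
\[
\psi_\lambda(r,\theta) \;=\; \bigl(h(r),\; \theta + \alpha(r)\bigr),
\]
where $h\colon[0,\infty)\to[0,\infty)$ is a smooth orientation-preserving diffeomorphism whose only fixed points are $r=0$ and $r=1$, with $0<h'(0)<1$, $h'(1)>1$, $h(r)<r$ on $(0,1)$, $h(r)>r$ on $(1,\infty)$, and $h(r)\to\infty$; and $\alpha\colon[0,\infty)\to\mathbb{R}$ is smooth with $\alpha(1)=\lambda$ irrational, and locally constant near $r=0$ and near $r=\infty$. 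Compactifying by declaring $\infty$ to be an additional fixed point will give a self-map of $S^2$.

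Next I would identify $P(\psi_\lambda)$ and $\Omega(\psi_\lambda)$. Since the radial part of $\psi_\lambda$ is $h$, any periodic point must have an $r$-coordinate that is periodic under $h$, which forces $r\in\{0,1,\infty\}$. At $r=1$ the angular dynamics is $\theta\mapsto\theta+\lambda$, which has no periodic orbits because $\lambda$ is irrational; hence $P(\psi_\lambda)=\{0,\infty\}$, a closed set of two points. However, every point of the unit circle lies in $\Omega(\psi_\lambda)$: its forward orbit is an irrational rotation orbit, dense in the circle, so it returns to every neighborhood. Interior points of either invariant disk accumulate on $0$ or $\infty$ and never return, hence are wandering. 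Thus $P(\psi_\lambda)=\{0,\infty\}\subsetneq \{0,\infty\}\cup S^1 = \Omega(\psi_\lambda)$.

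To verify hyperbolicity, I would pass to Cartesian coordinates near each pole: near $0$, the map becomes the composition of rotation by $\alpha(0)$ with multiplication by $h'(0)$, giving a hyperbolic sink with eigenvalues of modulus $h'(0)<1$, and the corresponding analysis at $\infty$ gives a hyperbolic sink there as well (the unit circle acting as a normally expanding invariant manifold separating their basins). The Kupka--Smale condition is then automatic: $W^u(0)=\{0\}$ and $W^u(\infty)=\{\infty\}$ while $W^s(0)$ and $W^s(\infty)$ are disjoint open disks, so all pairwise intersections are empty and vacuously transverse.

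The main obstacle is ensuring $\psi_\lambda$ is genuinely $C^\infty$ at the two poles of $S^2$, since polar coordinates are singular there: the expression $z\mapsto h(|z|)e^{i(\arg z+\alpha(|z|))}$ is only smooth at the origin if $\alpha$ is locally constant there and $h$ has an appropriate odd-order Taylor expansion. I would arrange these conditions by a standard bump-function modification of $h$ and $\alpha$ supported away from $r=1$, so that the dynamics on the invariant unit circle, and hence the nonwandering/periodic comparison above, is not disturbed.
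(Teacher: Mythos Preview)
Your construction is correct and follows essentially the same idea as the paper: build a diffeomorphism of $S^2$ with exactly two hyperbolic fixed points (the poles) and an invariant great circle on which the map acts as an irrational rotation, so that the circle lies in $\Omega$ but contributes no periodic points. The paper works directly in spherical coordinates with an explicit cubic in the latitude $\phi$, takes the rotation angle to be constant, and has the poles as \emph{sources} with the equator attracting; you instead work in polar coordinates on $\mathbb{R}^2$ with one-point compactification, allow a radius-dependent rotation angle, and make the poles \emph{sinks} with the invariant circle repelling---effectively the inverse picture. Your treatment is in one respect more careful: you flag the smoothness issue at the poles (polar/spherical coordinates being singular there) and indicate how to handle it with bump functions, a point the paper passes over.
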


Theorem \ref{th1} will be proved in section \ref{sec2}.

\smallskip

Seeing this negative result, it is natural to seek a stronger hypothesis. A diffeomorphism $\psi$ is called \emph{Kupka--Smale} if all periodic points are hyperbolic, and there are no tangential intersections between stable and unstable manifolds of periodic points.  Then we ask:

\begin{question}\label{c2}
If $\psi$ is a self--diffeomorphisms of $\mathbb{R}^2$, which is Kupka-Smale, then does $P(\psi)$ closed imply that $P(\psi) = \Omega(\psi)$?
\end{question}

Unfortunately, the answer is again ``no''.

\begin{corollary}\label{coro1}
The diffeomorphism $\psi_{\lambda}$ constructed in Theorem \ref{th1} is Kupka-Smale, has $P(\psi_{\lambda})$ closed but but $P(\psi_{\lambda}) \neq \Omega(\psi_{\lambda})$.
\end{corollary}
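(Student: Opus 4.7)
The plan is to combine Theorem~\ref{th1} with a direct verification of the one remaining condition in the definition of Kupka--Smale. Theorem~\ref{th1} already supplies that $\psi_{\lambda}$ is a self--diffeomorphism of the sphere whose periodic set is closed, whose periodic points are all hyperbolic, and for which $P(\psi_{\lambda})\neq \Omega(\psi_{\lambda})$. To promote this to Kupka--Smale it suffices to check that for any two periodic points $p,q$ (possibly equal) of $\psi_{\lambda}$, the stable manifold $W^{s}(p)$ meets the unstable manifold $W^{u}(q)$ transversally wherever they intersect.

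First I would read off, from the construction in Section~\ref{sec2}, the full list of periodic orbits of $\psi_{\lambda}$ together with their types (sink, source, or saddle) and the local coordinate description of the invariant manifolds at each saddle. A natural feature of such constructions is that the periodic orbits are organized into disjoint forward/backward invariant ``cells'' accumulating on an invariant set that is responsible for the extra non--wandering behavior. Because the nontrivial invariant manifolds of each saddle are confined to its own cell, distinct saddles produce no heteroclinic intersections, and any homoclinic intersection reduces to checking transversality locally at a single saddle.

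The main obstacle is to rule out tangential intersections in the region near the accumulation set, where invariant manifolds of saddles of high period can approach one another. For this I would use the explicit form of $D\psi_{\lambda}$ at each periodic point: the eigenvectors of the derivative determine the tangent lines to $W^{s}$ and $W^{u}$, and by the construction these remain uniformly bounded away from each other in angle across the entire family of periodic orbits. Consequently, at every intersection point $x$ the tangent spaces $T_{x}W^{s}(p)$ and $T_{x}W^{u}(q)$ span $T_{x}\mathbb{R}^{2}$, which is exactly transversality. Combined with the conclusions of Theorem~\ref{th1}, this establishes Corollary~\ref{coro1}.
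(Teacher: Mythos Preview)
Your plan is sound in outline---use Theorem~\ref{th1} for everything except the transversality condition, then verify transversality directly---but the execution reveals that you have not actually looked at the map $\psi_{\lambda}$ constructed in Section~\ref{sec2}. You speak of saddles, of ``periodic orbits organized into disjoint forward/backward invariant cells accumulating on an invariant set,'' of ``saddles of high period,'' and of eigenvector angles ``uniformly bounded away from each other across the entire family of periodic orbits.'' None of this matches the construction. The periodic set of $\psi_{\lambda}$ consists of exactly two fixed points, the north and south poles, and both are \emph{sources}. There are no saddles, no periodic points of period greater than one, and no accumulation of periodic orbits.

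Once you read off this structure, the verification is immediate and far simpler than what you propose. For a source $p$ on a surface, $W^{s}(p)=\{p\}$ and $W^{u}(p)$ is an open $2$-dimensional set. The unstable manifold of each pole is its open hemisphere (minus the equator), so neither contains the other pole; hence $W^{s}(p)\cap W^{u}(q)=\emptyset$ whenever $p\neq q$. For $p=q$ the only intersection point is $p$ itself, where $T_{p}W^{u}(p)=T_{p}S^{2}$, so transversality is automatic. This is exactly the paper's argument: there are no nontrivial intersections to check, so Kupka--Smale holds vacuously. Your eigenvector-angle argument is aimed at a phenomenon that simply does not occur here.
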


Corollary \ref{coro1} will be proved in section \ref{sec2}.

\smallskip

The obvious way to strengthen the KS hypothesis is consider \textit{Axiom A} diffeomorphisms. A diffeomorphism $\psi$ is Axiom A if it is Kupka-Smale, and there is an invariant splitting of the stable (unstable) tangent bundle which is preserved and contracted (expanded) by $D\psi$. But an Axiom A diffeomorphism always satisfies the closure of the periodic points equals the nonwandering set.  Hence this case is trivial.  Thus this hypothesis is too strong.

Thus we must seek an alternate approach to finding conditions on diffeomorphisms which satisfy that $P(\psi)$ closed implies all nonwandering points are periodic.  

Toward that end, we re-examine the dimension one case, as applied to the quadratic family $q_{a}(x) = a- x^2$.  The dimension one results say that the maps with $P(q_a)$ closed are the maps for the parameter range from $a=-1/4$, the appearance of the first fixed point, up through the period doubling cascade which occurs as $a$ increases, but not including the Feigenbaum point at the end of the cascade (that map has nonperiodic nonwandering points). [Alternatively imagine the logistic family $L_{\lambda}(x) = \lambda x (1-x)$ for $\lambda=0$ up through the period doubling cascade.] For each map in this range, there are periodic points of period $1, 2, 4, ..., 2^{n} $ for some $n$.  If $n=0$, and $a > -1/4$, there is one attracting and one repelling fixed point.  If $n\geq 1$, for parameters $a$ which are not bifurcation points,  there is an attracting cycle of period $2^n$, and repelling cycles of each period $2^j$ for $1\leq j < n$ and there are two repelling fixed points.  Such maps have all periodic points hyperbolic. For parameters $a$ which are bifurcation points, the largest period cycle is indifferent and weakly attracting. 
Thus these are maps with not all periodic points hyperbolic, but yet the periodic set is closed and equal to the nonwandering set.  Thus, this is one example which shows the hypothesis that all periodic points are hyperbolic is not necessary in dimension one.

With this example in mind, we present our second main result concentrating on a widely studied family of diffeomorphisms, the  H\'{e}non family, which generalize the quadratic family to maps of two variables.

\begin{theorem}\label{th2}
Let $H_{a,b}(x,y)=(q_{a}(x)-by,x)$, be the 2--parameter family of  H\'{e}non diffeomorphisms of $\mathbb{R}^2$, where $q_{a}(x)=a-x^2$ is the one parameter quadratic family. If $\emph{P}(q_{a})$ is closed, then for all $b$ sufficiently small, 
$\emph{P}(H_{a,b})$ is closed and equals the nonwandering set of $H_{a,b}$.
\end{theorem}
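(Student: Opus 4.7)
The plan is to exploit the rigid structure forced by $P(q_a)$ closed: by Sarkovskii's theorem $q_a$ admits only periodic orbits whose periods are powers of $2$, and by the Nitecki--Xiong theorem $\Omega(q_a) = P(q_a)$, which is a finite set consisting of orbits of periods $1, 2, 4, \ldots, 2^n$ for some $n$. The strategy is to show that for $|b|$ sufficiently small the diffeomorphism $H_{a,b}$ is close enough to the singular limit $(x,y) \mapsto (q_a(x),x)$ that its periodic points are in bijection with those of $q_a$, and that the wandering behavior away from these orbits persists from the one-dimensional case.

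First I would separate into the generic case and the bifurcation case. In the generic case every periodic orbit of $q_a$ is hyperbolic: one attracting cycle of period $2^n$ and one repelling cycle of each period $2^j$ for $0 \le j < n$. A direct chain-rule computation shows that at the corresponding period-$k$ orbit of $H_{a,0}$ the matrix $DH_{a,0}^k$ has eigenvalues $(q_a^k)'(x_0)$ and $0$, both $\neq 1$, so the implicit function theorem applied to $H_{a,b}^k - \mathrm{Id}$ produces, for small $b$, a unique nearby periodic orbit of $H_{a,b}$ of the same period: attracting cycles persist as sinks and repelling cycles as saddles. A B\'ezout-type degree count on the polynomial system defining period-$k$ points (of degree $2^k$) then rules out additional periodic orbits of $H_{a,b}$, since the count is already exhausted by continuation from $q_a$. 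In the bifurcation case the top period-$2^n$ orbit of $q_a$ has multiplier $-1$, so $(q_a^{2^n})'(x_0) - 1 = -2 \neq 0$ and the same implicit function argument still produces a unique nearby period-$2^n$ orbit of $H_{a,b}$; it is indifferent but topologically attracting.

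Next I would show $\Omega(H_{a,b}) \subseteq P(H_{a,b})$. Since $\Omega(q_a) = P(q_a)$ is finite, one can construct a filtration of a compact trapping interval $I \subset \mathbb{R}$ by disjoint neighborhoods $U_0, \ldots, U_n$ of the periodic orbits of $q_a$ whose complement is swept monotonically toward the top sink under $q_a$. Thickening $I$ and each $U_j$ to a two-dimensional neighborhood yields, by uniform continuity of $H_{a,b}$ in $b$, an analogous filtration for $H_{a,b}$ whenever $|b|$ is small; every point in the trapping region but outside the fattened $U_j$ is then wandering for $H_{a,b}$, while inside each fattened $U_j$ local stable/unstable manifold theory at the persisting periodic orbit shows the only recurrent point is the orbit itself. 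Combined with the standard fact that orbits of $H_{a,b}$ outside a large disk escape to infinity, this gives $\Omega(H_{a,b}) = P(H_{a,b})$, and the latter is closed because it is finite.

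The principal obstacle is the bifurcation case: at those parameters $q_a$ has an indifferent cycle, so neither hyperbolic perturbation theory nor the Hartman--Grobman theorem applies directly around it. The delicate issue is to build a trapping neighborhood of the indifferent orbit which remains trapping for $|b|>0$, and to exclude the birth of a small invariant curve or a new period-$2^{n+1}$ orbit through a Hopf-like or period-doubling bifurcation in the two-dimensional perturbation. Here I would rely on the strong area contraction of $H_{a,b}$ (Jacobian $b$, with $|b| \ll 1$) together with a center-manifold reduction to a weakly attracting one-dimensional map, whose nonwandering set is precisely the indifferent orbit itself.
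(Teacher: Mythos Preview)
Your approach is far more detailed than the paper's, which simply invokes Holmes--Whitley \cite{HW}: it cites that for each $a$ with $P(q_a)$ closed (hence $a$ strictly below the Feigenbaum parameter $F_1$) the period-doubling bifurcation values $s_1 < f_2 < f_4 < \cdots$ extend to non-intersecting curves in the $(a,b)$-plane for small $b$, and that between these curves the nonwandering set of $H_{a,b}$ coincides with that of $q_a$. The paper therefore does not separate the generic and bifurcation cases or build a filtration; it outsources the analysis. Your continuation-plus-filtration strategy is essentially what such a reference would have to do, so the content is comparable, but you are supplying the mechanism rather than citing it.

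One step in your argument is incorrect, though harmless. The B\'ezout count does \emph{not} rule out new periodic orbits: for $a$ before the Feigenbaum point, $q_a$ has only $O(n)$ real periodic points of period dividing $2^n$, nowhere near the full degree $2^{2^n}$; the remaining roots are complex, so the real count is far from ``exhausted'' and could in principle accommodate new real orbits under perturbation. Fortunately you do not need this: your filtration argument already shows that every point outside the thickened neighbourhoods $U_j$ is wandering, and inside each $U_j$ the local analysis at the continued hyperbolic orbit pins down the unique recurrent orbit, so no extra periodic points can appear. You should simply delete the B\'ezout sentence. The bifurcation case you flag as the principal obstacle is genuine; the paper does not treat it explicitly either (it is absorbed into the citation), and your suggested center-manifold reduction using the strong area contraction $|\det DH_{a,b}| = |b| \ll 1$ is the standard and correct way to handle it.
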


Theorem \ref{th2} will be proved in section \ref{sec3}.

\smallskip
\subsection{Further avenues of study}

In section \ref{sec4} we discuss some open questions our study has raised in two dimensions.

Finally, we note that in 1978, Danker \cite{D} showed that for manifolds of dimension at least $3$, the situation is much more complicated. 

\begin{theorem}\label{Da}[Danker]
On any manifold $\mathbb{M}$ with dimension $n\geq 3$ there exists a diffeomorphism $\psi$ whose nonwandering set is formed by hyperbolic points but
 $\overline{\emph{P}(\psi)}\neq \Omega(\psi$).
\end{theorem}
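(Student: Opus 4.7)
The plan is to construct, on a model manifold of dimension $n\geq 3$, a diffeomorphism $\psi_0$ whose nonwandering set is a hyperbolic invariant set that strictly contains the closure of its periodic points, and then to transfer the construction to an arbitrary $\mathbb{M}$ via an embedded chart. The core phenomenon Danker is exploiting is that in dimension at least three there is enough room to \emph{isolate} a uniformly hyperbolic set whose periodic orbits are not dense, something ruled out in dimensions one and two by the results cited earlier in this section.

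The first step is to produce a uniformly hyperbolic invariant set without periodic points. Start from a Smale horseshoe $f\colon B\to B$ on a two-dimensional disk, with hyperbolic invariant Cantor set $K$ topologically conjugate to the full 2-shift $\sigma$ on $\{0,1\}^{\mathbb{Z}}$. Inside the full shift choose a minimal subshift $\Sigma\subset\{0,1\}^{\mathbb{Z}}$ with no periodic sequences, for instance the orbit closure of the Morse--Thue sequence, which is classically minimal and aperiodic. Transporting $\Sigma$ back to $K$ via the conjugacy gives a closed $f$-invariant set $\Lambda\subset K$ that is minimal, inherits the hyperbolic splitting of $K$, and contains no periodic points.

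The second step uses the extra dimension to make $\Lambda$ \emph{exactly} the nonwandering set of a global diffeomorphism. Work in $B\times[-1,1]\subset\mathbb{R}^3$ and define $\psi_0$ so that it coincides with $f\times\text{id}$ on a small neighborhood $U$ of $\Lambda\times\{0\}$ (so $\Lambda$ persists as a hyperbolic invariant set), but outside $U$ is deformed using the third coordinate so that (i) every orbit through a point of $K\setminus\Lambda$ is pushed off the slice $\{z=0\}$ and drifts monotonically toward a single sink, and (ii) the complement of $\Lambda$ in the ball is swept by a gradient-like Morse--Smale structure with one hyperbolic source and one hyperbolic sink. Glue to the identity outside a compact ball; for $n>3$ add further contracting linear factors, and then embed the whole picture into a coordinate chart of an arbitrary $\mathbb{M}$. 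The resulting $\psi$ has $\Omega(\psi)=\Lambda\cup\{\text{source, sink}\}$, which is hyperbolic, while $\overline{P(\psi)}=\{\text{source, sink}\}$, so that $\overline{P(\psi)}\subsetneq\Omega(\psi)$.

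The main obstacle is the isolation step: arranging that $\Lambda$ remains a hyperbolic invariant set after the deformation destroys all the remaining horseshoe recurrence, and that the deformation introduces no new nonwandering orbits. This requires a cutoff adapted to the local product structure of $\Lambda$ inside $K$, together with the observation that in dimension $\geq 3$ one can split apart the stable and unstable laminations of the discarded horseshoe pieces, using the normal direction, into transverse sheets whose intersections become \emph{wandering}; this freedom is unavailable in dimension two and is precisely why the analogous statement fails there. Once the cutoff is carried out carefully, the inclusion $\Lambda\subseteq\Omega(\psi)\setminus\overline{P(\psi)}$ is immediate from the absence of periodic points in $\Lambda$, and the theorem follows.
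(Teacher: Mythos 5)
The step you yourself single out as ``the main obstacle'' is not merely delicate --- it is impossible, so the construction cannot be repaired along these lines. Your $\Lambda$ is a compact hyperbolic set on which $\psi$ acts minimally, so every point of $\Lambda$ is recurrent for $\psi|_{\Lambda}$, and one can form periodic pseudo-orbits lying entirely inside $\Lambda$. The Anosov closing lemma (shadowing for hyperbolic sets) then produces genuine periodic orbits of the ambient diffeomorphism $\psi$ shadowing these pseudo-orbits; such orbits need not lie in $\Lambda$, but they lie in an arbitrarily small neighborhood of $\Lambda$ --- in particular inside the neighborhood $U$ where you leave the map untouched --- so no surgery performed outside $U$ can destroy them. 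Hence $\Lambda\subseteq\overline{P(\psi)}$ for \emph{any} diffeomorphism having $\Lambda$ as a hyperbolic invariant set (the standard statement is $\Omega(\psi|_{\Lambda})\subseteq\overline{P(\psi)}$ for every compact hyperbolic set $\Lambda$), which contradicts your claimed conclusion that $\Lambda\subseteq\Omega(\psi)$ while $\overline{P(\psi)}$ is only the source and the sink. This is precisely why an aperiodic minimal subshift inside a horseshoe cannot serve as the non-periodic hyperbolic part of $\Omega$. A secondary problem: near $\Lambda\times\{0\}$ you take $\psi_0=f\times\mathrm{id}$, whose third direction is neutral, so $\Lambda\times\{0\}$ is not even a hyperbolic set there; repairing this by contracting the third coordinate only makes the closing-lemma obstruction above fully applicable.

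For comparison, the paper offers no proof of this statement: it is quoted from Dankner \cite{D}. Dankner's actual examples evade the closing lemma by a different mechanism: the non-periodic points of $\Omega$ are nonwandering only because of orbits that make long excursions far away from $\Omega$ (travelling around a cycle joining hyperbolic pieces), so the relevant pseudo-orbits are not contained in, or near, the hyperbolic set and cannot be closed; the hypothesis $n\geq 3$ is used to position the stable and unstable manifolds of the pieces so that these excursions produce no transverse homoclinic intersections, a freedom unavailable in dimension two. If you want to reconstruct a proof, that is the mechanism to imitate, rather than trying to isolate a minimal aperiodic hyperbolic set, which no diffeomorphism can do in the sense your argument requires.
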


Thus, for now, we are content to concentrate on the dimension two case.

%%%%%%%%%%%%%%%%%%%%%%%%%%%%%%%%%%%%%%%%%%%%%%%%
\section{Proof of Theorem \ref{th1}}\label{sec2}
%%%%%%%%%%%%%%%%%%%%%%%%%%%%%%%%%%%%%%%%%%%%%%
In this section we shall prove Theorem \ref{th1}, Corollary \ref{coro1} and discuss the implications of this results.

\begin{proof}[Proof of Theorem \ref{th1}]

The following mapping is an example of such a diffeomorphism of the unit sphere $S^2$. Let 
$(r, \theta, \phi)$ denote the standard spherical coordinates on $\mathbb{R}^3$; i.e.,  $(x,y,z) = (r, \theta, \phi)$ means $(x,y) = (r, \theta)$ are the polar coordinates and $\phi$ is the angle the $z$-axis makes with the ray  from the origin to $(x,y,z)$.
Then  $S^2 ={\{(1,\theta, \phi)| \ \theta \in \mathbb{R} , \phi \in [0,\pi] \}}$. 
Define $\psi_\lambda:S^2 \rightarrow S^2$ as follows: 

$$
\psi_\lambda(1,\theta,\phi)=(1, \ \theta+\lambda, \ \frac{4}{\pi^2}\left(\phi-\frac{\pi}{2}\right)^3 + \frac{\pi}{2}). 
$$ 

\medskip

It is easy to see that the equator (where $\phi = \pi/2$) is an invariant set, on which the map is simply a rotation by $\lambda$, and both poles are fixed (the north pole is $(1,\theta, 0)$ and the south pole is $(1, \theta, \pi)$).  Also note both poles are repelling (sources).

 If $\lambda$ is not a rational multiple of $\pi$ then on the equator, the map is simply an irrational rotation. So here we have $P(\psi_{\lambda})$ is just the two poles, hence finite and hyperbolic, but $\Omega(\psi_{\lambda} )$ also includes a copy of $S^1$, the equator.  Thus $P(\psi_{\lambda}) \neq \Omega(\psi_{\lambda} )$. 
\end{proof}

\begin{remark}
The map constructed in the proof of Theorem \ref{th1} is not Axiom A because the equator is an invariant set, but not a hyperbolic invariant set.  
\end{remark}

\begin{proof}[Proof of Corollary \ref{coro1}]

The map ${\psi}_{\lambda}$ constructed in the proof of Theorem \ref{th1} is Kupka--Smale because there are no tangential intersections between stable and unstable manifolds. Indeed, the south pole is a source, with unstable manifold the southern hemisphere, minus the equator.  The north pole is also a source, with unstable manifold the northern hemisphere minus the equator.  Since there are no other periodic points, there are no tangential intersections, since there are no intersections at all. So the map is KS.

\end{proof}

%%%%%%%%%%%%%%%%%%%%%%%%%%%%%%%%%%%%%%%%%%%%%%%%%5
\section{Proof of Theorem \ref{th2}}\label{sec3}
Now we prove Theorem \ref{th2}.
%%%%%%%%%%%%%%%%%%%%%%%%%%%%%%%%%%%%%%%%%%%%%%%%%%%%%%

\begin{proof}[Proof of Theorem \ref{th2}]
Let $s_1=-1/4$ be  the parameter value at which the first fixed point of the quadratic family $q_a(x) = a-x^2$ appears, hence $s_1$ is the bifurcation point after which the attracting fixed point is born.  Call $f_{2^n}$ the bifurcation parameter value after which the attracting $2^n$--cycle is born. (Note this is potentially troublesome notation.  Rather than $f_c$ denoting a map, here $f_k$ denotes a parameter value which specifies a map $q_{f_k}$.  We are using the same notation as the paper \cite{HW} in order to facilitate the insterested reader following up on this source.)

Call $F_1$ the parameter value at the climax of the period doubling cascade, so $F_1$ is the Feigenbaum parameter.  So for the quadratic family, we know $s_{1} < f_{2} < f_{4} < ... < F_{1}$,  and there are no bifurcations between any of those parameter values.

\smallskip

According to \cite[page 63]{HW}, the bifurcation points $s_{1}$, $f_{2}$,..., $f_{2^n}$,...  extend to curves in the $(a,b)$ plane, at least for $b$ small, which do not intersect each other, and which do not intersect any other bifurcation curves.    Further, for each $a$ value, if $b$ is sufficiently small, the nonwandering set of $H_{a,b}$ is identical to the nonwandering set of $q_a$.  Hence, for this range, the nonwandering set consists precisely of the finite set of periodic points of periods $1$ through $2^n$ for some $n$.  
\end{proof}

\begin{remark}
Note the  H\'{e}non maps that satisfy the conclusion of Theorem \ref{th2} have the exact same structure to their periodic set as the quadratic interval maps that satisfy $P=\Omega$, thus some are Axiom A, and some are not. 
\end{remark}

%%%%%%%%%%%%%%%%%%%%%%%%%%%%%%%%%%%%%%%5
\section{Final comments and open problems}\label{sec4}
%%%%%%%%%%%%%%%%%%%%%%%%%%%%%%%%%%%%%%%%%%%

The study of  H\'{e}non maps started in section~\ref{sec3} leads to a natural next step for investigation.  
In the  H\'{e}non family there exist additional bifurcations as $b$ increases.  That is, as $b$ increases between the extended period doubling bifurcation curves of $a$, a bifurcation point is hit at which a tangential intersection of stable and unstable manifolds occurs.  We know that just past that point, there is a transverse intersection, thus a horseshoe is created, hence there are non-wandering points.  But we do not know whether the periodic set is equal to the nonwandering set for those boundary parameter values which contain only periodic points of period $2^n$ and a homoclinic tangency.  This issue raises a couple of questions.

\begin{question}
Are there any other  H\'{e}non maps with closed periodic set?  If so, for these maps, is the periodic set equal to the nonwandering set?
\end{question}

\begin{question}
If there exists a homoclinic tangency of stable and unstable manifolds of a periodic point, does that imply the periodic set is not closed?
\end{question}

\smallskip

The counterexample in section~\ref{sec2} of the map of the sphere with finite and hyperbolic periodic set but nonperiodic nonwandering points suggests a direction for further inquiry, including possibly a shift in the type of question asked.  In that example, the nonwandering set consisted of a disjoint union of a closed periodic set and an invariant domain on which the map was an irrational rotation. This leads us to wonder whether a rotation domain is the only possible obstruction. That is:

\begin{question}
If $\psi$ is a diffeomorphism of a compact 2-manifold with closed periodic set and no rotation domains, is $P(\psi) = \Omega(\psi)$?
\end{question}

Finally, we stated our broad goal as classifying maps $\psi$ with $P(\psi) = \Omega(\psi)$.  Because these are the nonchaotic mappings, those are also the mappings with zero topological entropy.  So we could re-cast our goal as:

\begin{problem}
Classify self-diffeomorphisms of compact 2-dimensional manifolds having zero topological entropy (and describe their nonwandering sets).
\end{problem}

This is an ambitious project.  Franks and Handel have recent work on classification of area preserving zero entropy maps of the sphere, see \cite{FH}.  
In order to guarantee zero topological entropy, we would restrict our study to maps which are isotopic to the identity, with a finite set of periodic points.  A broad goal would be to describe the nonwandering set for those maps (in terms of periodic points, rotation domains, and any other potential invariant sets).

%%%%%%%%%%%%%%%%%%%%%%%%%%%%%%%%%%%%%%%%%%%
\section*{Acknowledgements}
Much of this work was done during a research stay of the second author at University Wisconsin--Milwaukee, the support of the institution is greatly appreciated. We would also like to thank Kamlesh Parwani for a helpful conversation regarding references.  

This work has been partially supported by MICINN/FEDER grant
number MTM2011--22587. 

%%%%%%%%%%%%%%%%%%%%%%%%%%%%%%%%%%%%

\end{document}